\numberwithin{equation}{section}
\theoremstyle{plain}
\newtheorem{prop}{Proposition}
\newtheorem{thm}[prop]{Theorem}
\theoremstyle{definition}
\newtheorem{conj}[prop]{Conjecture}
\newtheorem*{cond*}{Condition (H1)}
\def\ra{\rightarrow}
\def\bP{{\mathbb P}}
\def\rH{{\mathrm H}}
\def\Pic{\mathrm{Pic}}
\def\lim{\mathrm{lim}}
\author{Yuri Tschinkel}
\address{Courant Institute\\
                New York University \\
                New York, NY 10012 \\
                USA }
\email{tschinkel@cims.nyu.edu}
\address{Simons Foundation\\
160 Fifth Avenue\\
New York, NY 10010\\
USA}
\author{Kaiqi Yang}
\address{Courant Institute\\
                New York University \\
                New York, NY 10012 \\
                USA }
\email{ky994@nyu.edu}
\title[Stably rational del Pezzo surfaces]{Potentially stably rational del Pezzo surfaces over nonclosed fields}
\begin{document}
\date{\today}

\maketitle

\section{Introduction}

A geometrically rational surface $S$ over a nonclosed field $k$ is $k$-birational to either a del Pezzo surface of degree $n\in [1,\ldots, 9]$ 
or a conic bundle (see \cite{isk}).
Throughout, we assume that $S(k)\neq \emptyset$. 
This implies $k$-rationality of $S$ when $n\in [5,\ldots , 9]$ or when the number of degenerate fibers of the conic bundle
is at most 3. 

Let $G_k$ be the absolute Galois group of $k$, it acts on exceptional curves and on the geometric Picard group  
$\Pic(\bar{S})$ of $S$. The surface $S$ is called {\em split} over $k$ if all exceptional curves are defined over $k$, 
and {\em minimal} if no blow-downs are possible over $k$, i.e.,  
there are no $G_k$-orbits consisting of pairwise disjoint exceptional curves.   
A minimal del Pezzo surface of degree $\le 4$ over $k$ is not rational (see, e.g., \cite[Theorem 3.3.1]{mt}). 
A surface $S$ is called {\em stably rational} over $k$ 
if $S\times \mathbb P^m$ is birational to a projective space, over $k$. 
A necessary condition for stable rationality of $S$ over $k$ is

\begin{cond*}
\label{cond:h1}
\[ 
\label{eqn:h1}
\rH^1(G_{k'}, \Pic(\bar{S}))=0, \quad \text{ for all finite extensions} \quad k'/k. 
\] 
\end{cond*}

\noindent
As a special case of a general conjecture of Colliot-Th\'el\`ene and Sansuc one expects that this is also sufficient:

\begin{conj}
If $S$ satisfies condition (H1) then $S$ is stably rational over $k$. 
\end{conj}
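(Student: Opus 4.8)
The plan is to pass from the surface $S$ to its Néron--Severi torus and thereby reduce the conjecture to a purely lattice-theoretic statement about the Galois module $\Pic(\bar{S})$. First I would dispose of the cases in which $S$ is already $k$-rational: by the discussion preceding (H1), a geometrically rational $S$ with $S(k)\neq\emptyset$ is $k$-rational when $n\in[5,\ldots,9]$ or when the conic bundle has at most three degenerate fibers, so it suffices to treat minimal del Pezzo surfaces of degree $n\le 4$ (and conic bundles with many degenerate fibers). For a del Pezzo surface of degree $n$, the group $G_k$ acts on $\Pic(\bar{S})$ through a finite subgroup $\Gamma$ of the Weyl group $W(E_{9-n})$ fixing the canonical class and preserving the intersection form; both minimality and condition (H1) cut out a finite, explicitly enumerable list of conjugacy classes of such $\Gamma$ inside $W(D_5)$, $W(E_6)$, $W(E_7)$, and $W(E_8)$.

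Next I would invoke the theory of universal torsors of Colliot-Th\'el\`ene and Sansuc. Since $S(k)\neq\emptyset$, the surface carries a universal torsor $\cT\to S$ under the torus $T$ with character lattice $\hat T=\Pic(\bar{S})$; as $S$ is geometrically rational, $\cT$ is an open subvariety of a $\bar{k}$-rational variety, and its $k$-birational type is governed by that of $T$. The key reduction is that stable $k$-rationality of $S$ follows from stable $k$-rationality of $T$. By the Colliot-Th\'el\`ene--Sansuc dictionary, the latter is equivalent to the vanishing of the flasque invariant of $T$, i.e.\ to the flasque module $\hat F$ in a flasque resolution
\[ 0 \to \Pic(\bar{S}) \to \hat E \to \hat F \to 0, \qquad \hat E \ \text{permutation}, \quad \hat F \ \text{flasque}, \]
being stably permutation, equivalently to $\Pic(\bar{S})$ itself being stably permutation. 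Here condition (H1) enters through its cohomological meaning: $\rH^1(G_{k'},\Pic(\bar{S}))=0$ for all $k'/k$ says precisely that $\Pic(\bar{S})$ is \emph{coflasque} (all $\rH^1$ of all subgroups vanish), and one has the complementary identification $\rH^1(G_k,\Pic(\bar{S}))\cong \Br(S)/\Br(k)$, so (H1) is the vanishing of the algebraic Brauer obstruction over every finite extension.

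It then remains to show that coflasqueness forces the flasque invariant to vanish for the lattices at hand, and this is where the main obstacle lies. In general the implication ``coflasque $\Rightarrow$ stably permutation'' is \emph{false}: there exist coflasque $\Gamma$-lattices that are not even invertible, so no soft cohomological argument can close the gap, and the entire content of the conjecture for del Pezzo surfaces is that the specific $E_n$-type root-lattice structure of $\Pic(\bar{S})$ excludes these pathologies. Concretely, for each conjugacy class of $\Gamma$ surviving the reductions of the first step I would compute an explicit flasque resolution of $\Pic(\bar{S})$, verify that its flasque part is stably permutation, and exhibit the permutation complements, using the Tate cohomology groups $\hat{\rH}^{-1}$ and $\hat{\rH}^{1}$ of all subgroups as the obstructions to be checked. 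I expect the difficulty to be genuine: isolating a structural feature of the del Pezzo lattices that guarantees this vanishing uniformly, rather than verifying it class by class through the subgroup lattices of $W(D_5)$, $W(E_6)$, $W(E_7)$, and $W(E_8)$, is the heart of the matter and is what keeps the conjecture open beyond the ranges accessible to direct computation.
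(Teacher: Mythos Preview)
The statement you are attempting to prove is presented in the paper as a \emph{conjecture}---a special case of a general conjecture of Colliot-Th\'el\`ene and Sansuc---and the paper does not prove it, so there is no proof in the paper to compare your proposal against. The paper's contribution lies elsewhere: it classifies, for del Pezzo surfaces of degrees $1$, $2$, and $3$, the conjugacy classes of subgroups of the relevant Weyl groups whose action is minimal and satisfies (H1), thereby producing the list of test cases for the conjecture without settling any of them. The paper explicitly recalls that only a single instance (degree $4$, Galois group $\mathfrak S_3$) is known, established by direct geometric construction, and that finding another example is ``a major open problem.''

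Your proposal is a reasonable outline of the Colliot-Th\'el\`ene--Sansuc descent framework in which the conjecture is usually discussed, and you are candid that it is not a proof: you correctly note that ``coflasque $\Rightarrow$ stably permutation'' fails for general lattices, so the case-by-case verification you describe is precisely the open content of the problem. There is, however, a second gap you pass over. The assertion that ``stable $k$-rationality of $S$ follows from stable $k$-rationality of $T$'' is not a theorem you can simply invoke. What descent gives is that $S$ is stably $k$-rational if and only if a universal torsor $\cT$ is; but $\cT$ is a $T$-torsor over $S$, not $T$ itself, and even when $\Pic(\bar S)$ is stably permutation (equivalently $T$ is stably $k$-rational) it does not follow formally that $\cT$ is stably $k$-rational. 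Your phrase ``its $k$-birational type is governed by that of $T$'' papers over exactly this point. Thus your plan has two genuine lacunae rather than one: passing from (H1) to $\Pic(\bar S)$ stably permutation, and then from that to stable rationality of the universal torsor. Neither step is known in the cases the paper enumerates, and the single known example was handled by an explicit birational map rather than by closing either gap abstractly.
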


\noindent
Only one example of a minimal, and thus nonrational, but stably rational del Pezzo surface of degree $\le 4$ is known at present \cite{ct1,ct2,bs}; 
in this case the Galois group acts via the symmetric group $\mathfrak S_3$, the smallest nonabelian group (see Section~\ref{sect:3-4} for a description of
this action). Finding another example is a major open problem. 
There are however examples of minimal del Pezzo surfaces of degrees $1\le n\le 4$ and of conic bundles with at least $4$ degenerate fibers,
{\em failing} (H1) and thus stable rationality over $k$. 

For $n=3,2,$ and $1$,  
the Galois group $G_k$ of $k$ acts on the primitive Picard group of $S$ (the orthogonal complement of the canonical class in $\Pic(S)$)
through the Weyl group $W(\mathsf E_{9-n})$; 
for $n=4$ and conic bundles with $n+1$ degenerate fibers through $W(\mathsf D_{n+1})$.
These actions have been extensively studied, in connection with arithmetic applications and rationality questions, e.g., 
the Hasse Principle and Weak Approximation, 
when $k$ is a number field (see e.g., \cite{manin}, \cite{kst}, \cite{swd}, \cite{urabe}, \cite{li}, \cite{dp}).

This note is inspired by a recent result of Colliot-Th\'el\`ene concerning stable rationality 
of geometrically rational surfaces over quasi-finite $k$, i.e., 
perfect fields with procyclic absolute Galois groups \cite{ct-stable}. 
The main result of \cite{ct-stable} is that over such fields, stably rational surfaces
are actually rational. This follows from: 

\begin{thm}\cite[Theorem 4.1]{ct-stable}
\label{thm:1}
Let $S$ be a surface over $k$, 
geometrically rational with $S(k)\neq \emptyset$. If $S$ is split by a cyclic extension 
and is not $k$-rational then there exists a finite separable extension $k'/k$ such that 
$$
\rH^1(G_{k'}, \Pic(\bar{S}))\neq 0.
$$ 
\end{thm}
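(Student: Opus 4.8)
The plan is to prove the contrapositive: assuming $\rH^1(G_{k'},\Pic(\bar{S}))=0$ for \emph{every} finite separable extension $k'/k$, I will show that $S$ is $k$-rational. Write $L/k$ for the cyclic extension splitting $S$ and set $C=\Gal(L/k)$, so that the $G_k$-action on the lattice $M:=\Pic(\bar{S})$ factors through $C$. The first move is to translate the hypothesis into lattice theory. For each subgroup $H\le C$ take $k'=L^H$; then $G_{k'}$ acts on $M$ through $\Gal(L/L^H)=H$, and since $M$ is a torsion-free module with trivial $G_L$-action the inflation map gives $\rH^1(G_{k'},M)\cong\rH^1(H,M)$. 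As every subgroup of $C$ occurs this way, and the action over an arbitrary $k'$ factors through some subgroup of $C$, the hypothesis is equivalent to $\rH^1(H,M)=0$ for all $H\le C$, i.e. to $M$ being a \emph{coflasque} $C$-lattice.

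Next I would exploit cyclicity, which is the heart of the matter. Every subgroup $H\le C$ is cyclic, so its Tate cohomology is $2$-periodic and $\hat{\rH}^1(H,M)\cong\hat{\rH}^{-1}(H,M)$. Hence $M$ coflasque ($\hat{\rH}^1(H,M)=0$ for all $H$) is equivalent to $M$ flasque ($\hat{\rH}^{-1}(H,M)=0$ for all $H$), so $M$ is simultaneously flasque and coflasque. Applying the Colliot-Th\'el\`ene--Sansuc vanishing $\Ext^1_C(F,Q)=0$ for $F$ flasque and $Q$ coflasque to the dual of a flasque resolution, a lattice that is both flasque and coflasque splits off a permutation module, so $M$ is \emph{invertible}; and for $C$ cyclic an invertible lattice is stably permutation. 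It is precisely here that cyclicity is indispensable: for non-cyclic groups flasque and coflasque genuinely differ, as the $\mathfrak{S}_3$-example of the introduction shows, where $M$ is coflasque yet $S$ is not $k$-rational.

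Finally I would return to geometry via the rationality criterion for geometrically rational surfaces carrying a rational point: if $\Pic(\bar{S})$ is stably permutation as a $G_k$-module, then $S$ is $k$-rational. Combined with the previous paragraph this yields the desired $k$-rationality of $S$, which is exactly the contrapositive conclusion. I note that $\rH^1(G_{k'},-)$ and the flasque type of $\Pic(\bar{S})$ are birational invariants --- permutation summands contribute nothing by Shapiro's lemma, since $\rH^1(H',\bZ)=\Hom(H',\bZ)=0$ for finite $H'$ --- so one is free to replace $S$ by a minimal model throughout if that simplifies the bookkeeping.

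The main obstacle will be the interface between the lattice theory and the geometry. Two points need care: the upgrade from \emph{invertible} to \emph{stably permutation}, which relies on the vanishing of the relevant permutation class group for the specific cyclic group $C$ (clear for $C$ of prime order, where one checks directly that coflasque lattices are permutation, but requiring the Endo--Miyata analysis for general cyclic $C$); and the passage from a stably permutation Picard module together with a rational point to honest $k$-rationality, which is where the rational point is used to descend the geometric birational equivalence. The cohomological reduction of the first two paragraphs is, by contrast, routine.
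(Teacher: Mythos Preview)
Your lattice-theoretic chain --- coflasque $\Rightarrow$ flasque (by $2$-periodicity over cyclic subgroups) $\Rightarrow$ invertible $\Rightarrow$ stably permutation (Endo--Miyata for cyclic $C$) --- is correct and standard. The gap is the final geometric step: there is no ``rationality criterion'' asserting that a geometrically rational surface with a rational point and stably permutation $\Pic(\bar S)$ must be $k$-rational. In fact this is false, and the $\mathfrak S_3$ degree-$4$ del Pezzo surface from the introduction is already a counterexample. That surface is stably rational over $k$; by stable-birational invariance of the stably-permutation class of $\Pic$ (blowups along smooth centers add permutation summands, so weak factorization gives $\Pic(\bar X)\oplus P_1\cong\Pic(\bar Y)\oplus P_2$ for birational $X,Y$), its Picard lattice \emph{is} stably permutation as an $\mathfrak S_3$-module; yet the surface is minimal of degree $\le 4$, hence not $k$-rational. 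What a stably-permutation Picard lattice buys you, via universal torsors, is at most \emph{stable} rationality --- precisely the content of the open Conjecture in the introduction --- never honest rationality. You have therefore mislocated where cyclicity is essential: it is not (only) in the passage coflasque $\Rightarrow$ stably permutation, but in the geometry, and your step~7 attempts to shortcut exactly the part that carries the content of the theorem. You yourself flag this passage as ``the main obstacle''; it is not an obstacle but a wall.

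Neither proof on offer here proceeds via lattices. Colliot-Th\'el\`ene's original argument, as the paper describes it, is a case-by-case run through conjugacy classes of elements of $W(\mathsf D_5)$, $W(\mathsf E_6)$, $W(\mathsf E_7)$, $W(\mathsf E_8)$, verifying for each cyclic action that either (H1) fails for some subgroup or a Galois-stable set of pairwise disjoint exceptional curves exists, so the surface is non-minimal and can be blown down to something of higher degree. The present paper's alternative for the del Pezzo case is an exhaustive {\tt magma} enumeration of \emph{all} subgroups (up to conjugacy) satisfying (H1) and giving minimal surfaces, followed by the observation that no cyclic group appears on the resulting lists. In both approaches the bridge from combinatorics to rationality is the classical input that a minimal del Pezzo surface of degree $\le 4$ (resp.\ a minimal conic bundle with $\ge 4$ degenerate fibres) is never $k$-rational; ``cyclic $+$ (H1) $\Rightarrow$ non-minimal'' then yields ``cyclic $+$ (H1) $\Rightarrow$ rational''. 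Your argument supplies no substitute for this step.
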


The proof proceeds via a case-by-case analysis of actions of (conjugacy classes of) elements of the corresponding Weyl groups, investigated in 
connection with the study of the Hasse-Weil zeta function of del Pezzo surfaces.   
For $n=4$ this is due to \cite{swd}, \cite{manin} and also
follows from \cite{kst}; for $n=3$ this goes back to Trepalin.

For general $k$, it is of interest to identify Galois actions potentially giving rise to minimal, stably rational surfaces, 
i.e., those satisfying (H1). 
This has been done in \cite{kst} for del Pezzo surfaces of degree 4. 
Our main result is a classification of the relevant actions in degrees $3$, $2$, and 1. 
In particular, this immediately gives an alternative proof of Theorem~\ref{thm:1} for del Pezzo surfaces; 
there are simply no cyclic groups on the list of actions 
in Sections~\ref{sect:2} and \ref{sect:1}. 

The computation is organized as follows: the {\tt magma} program produces a  list of subgroups  (modulo conjugation); then, starting with small groups, computes
first cohomology groups. When it finds a group with nontrivial first cohomology, it eliminates all groups containing it. 
In this way, the poset of subgroups is rapidly exhausted. After that, minimality and presence of conic bundles are easily checked. 
The code and lists of orbit decompositions for subgroups satisfying (H1) are available at:

\

\centerline{ 
{\tt cims.nyu.edu/\,$\tilde{}$\,tschinke/papers/yuri/18h1dp/magma/ }}

\

\noindent
{\bf Acknowledgments:}  We are grateful to J.-L. Colliot-Th\'el\`ene for helpful comments and suggestions. 
The first author was partially supported by NSF grant 1601912.

\section{Degree 4 and 3}
\label{sect:3-4}

We use the following notation:
\begin{itemize}
\item $\mathfrak C_n$ - cyclic group of order $n$
\item $\mathfrak D_{n}$ - dihedral group of order $2n$
\item $\mathfrak F_n$ - Frobenius group of order $n(n-1)$ 
\item $\mathfrak S_n$ - symmetric group of order $n!$
\end{itemize}

Let $S$ be a minimal del Pezzo surface of degree 4, satisfying Condition (H1). We recall Theorems E and F from \cite{kst}: 

\begin{itemize}
\item If $S$ admits a conic bundle structure then $S$ is $k$-birational to
$$
x^2-ay^2=f_3(t), \quad \deg(f_3)=3, 
$$
where $a=\mathrm{disc}(f_3)$. The Galois group of the splitting field is $\mathfrak S_3$. One of the degenerate fibers, over $\infty$, is defined
over $k$, the other three, corresponding to roots of $f_3$,  are permuted by the $\mathfrak S_3$ action, the components of all 
singular fibers are exchanged the Galois action of the discriminant quadratic
extension. A surface $S$ of this type is not rational but stably rational over $k$. 
\item Assume that $S$ does not admit a conic bundle structure over $k$. 
Let $\tilde{S}\ra S$ be a blowup, with center in a suitable $k$-rational point; $\tilde{S}$ is a 
smooth (nonminimal) cubic surface admitting a conic bundle with 5 degenerate fibers. Then $\tilde{S}$ is of type $I_1,I_2$, or $I_3$  
listed in \cite[Theorem 6.15]{kst}. The Galois groups of corresponding splitting fields are $\mathfrak S_2\times \mathfrak S_3$ in the first case, a nontrivial extension of $\mathfrak S_3$ by $\mathfrak S_2$ in the second case,   
and a nontrivial central extension of $\mathfrak S_2\times \mathfrak S_3$ by 
$\mathfrak S_2$ in the third case.  
In Case 1, there are two degenerate fibers defined over $k$, with nontrivial Galois action on the components of the fibers, and three Galois conjugated degenerate fibers. In the Cases 2 and 3, the Galois-action has two orbits on the set of degenerate fibers, of length 2 and 3.  
\end{itemize}

Our first result is:

\begin{prop}
\label{prop:cubic}
There are no minimal cubic surfaces satisfying Condition (H1). 
In particular, a $k$-minimal cubic surface is not stably rational over $k$. 
\end{prop}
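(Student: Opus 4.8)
The plan is to convert the statement into a finite computation inside the Weyl group $W(\mathsf E_6)$ and then to organize that computation so that it terminates rapidly. Since $\bar{S}$ is a blow-up of $\mathbb{P}^2$ at six points, $\Pic(\bar{S})\cong\bZ^7$ carries the intersection form, the canonical class $K$, and the $27$ exceptional curves; the $G_k$-action preserves all of this, hence factors through a finite subgroup $G\le\Aut(\Pic(\bar{S}),K)=W(\mathsf E_6)$. Every subgroup $H\le G$ is the image of $G_{k'}$ for a suitable finite separable $k'/k$ (take $k'$ to be the fixed field of the preimage of $H$), and since $\Pic(\bar{S})$ is torsion-free the action of the kernel contributes nothing to $\rH^1$ by inflation–restriction; thus Condition (H1) for $S$ is equivalent to the group-theoretic statement
\[
\rH^1(H,\Pic(\bar{S}))=0\qquad\text{for every subgroup }H\le G.
\]

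First I would fix, once and for all, the concrete data: the lattice $\bZ^7$ with its $W(\mathsf E_6)$-action, the list of $27$ lines with their incidence graph, and the class $K$. With this, minimality becomes a purely combinatorial test on $G$-orbits. Indeed, $S$ is $k$-minimal exactly when no $G$-orbit on the $27$ lines consists of pairwise skew curves: such an orbit would be a Galois-stable set of disjoint exceptional curves and could be blown down over $k$. So for each candidate $G$ both conditions to be checked—satisfying (H1) and being minimal—are read off from the action on $\Pic(\bar{S})$ and the orbit structure on the $27$ lines.

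Next I would enumerate the conjugacy classes of subgroups of $W(\mathsf E_6)$ and compute $\rH^1(H,\Pic(\bar{S}))$, exploiting the monotonicity noted in the introduction: if $\rH^1(H,\Pic(\bar{S}))\neq 0$ then every overgroup of $H$ violates (H1). Thus I would process subgroups from small to large, record the minimal ``obstruction'' groups—often already cyclic subgroups generated by suitable Weyl elements, which eliminate much of the poset—and discard their entire upward closure. The groups that survive, namely those all of whose subgroups have vanishing $\rH^1$, form a short list, and for each I would apply the minimality test above.

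The expected outcome is that every surviving (H1)-group is non-minimal: each admits a $G$-orbit of pairwise skew lines, hence an equivariant blow-down over $k$. Equivalently, every $k$-minimal $G\le W(\mathsf E_6)$ contains a subgroup with nonvanishing $\rH^1$, which is exactly the assertion of the proposition. The ``in particular'' clause then follows at once, since (H1) is necessary for stable rationality: a $k$-minimal cubic surface fails (H1), so it cannot be stably rational over $k$. The hard part is not any individual step but the scale and completeness of the bookkeeping—$W(\mathsf E_6)$ has order $51840$ and a large subgroup lattice—so the real care lies in ensuring the enumeration of conjugacy classes of subgroups is exhaustive and the cohomology computations are correct; the poset-pruning by minimal obstruction groups is what makes the search terminate quickly.
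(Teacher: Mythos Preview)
Your proposal is correct and matches the paper's own approach essentially verbatim: the paper's proof reads ``Direct calculation with {\tt magma}'', and the organization you describe---enumerate conjugacy classes of subgroups of $W(\mathsf E_6)$, compute $\rH^1$ starting from small groups, prune the upward closure of any obstruction, then test minimality on the survivors---is exactly the procedure the authors outline in the introduction.
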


\begin{proof}
Direct calculation with {\tt magma}. 
\end{proof}

\section{Degree 2}
\label{sect:2}

In the description below we encode the Galois action on the set of exceptional curves as follows: we write
$\{ v_1^{r_1}, \ldots, v_m^{r_m}\}$ for the decomposition into orbits, where $v_j$ are dual intersection graphs, enumerated below, and $r_j$ are their multiplicities.
For minimal del Pezzo surfaces of degree 2 we find unique orbit types with cardinality 4, 8, 18, 24, 30, 42, two types of cardinality 2 and 12, and three types of cardinality 6 and 10.
The occurring graphs for each orbit are symmetrical: each vertex has the same number of outgoing edges (with multiplicities). We write 
$$
(n)[s_1^{t_1}, \ldots, s_d^{t_d}]
$$
for a graph with $n$ vertices, where each vertex has $t_j$ outgoing edges of multiplicity $s_j$ (equal to the intersection number between the two exceptional curves connected by this edge). 
The corresponding graphs are listed below:

\begin{itemize}
\item[-] $2_c:=(2)[1]$  $\bullet - \bullet$, $2:=(2)[2]$ $\bullet = \bullet$
\item[-]  $4:=(4)[1^3]$
\item[-]  $6_1:=(6)[1^2,2]$, $6_2:=(6)[1^4]$, $6_c=(6)[1]$ conic bundle
\item[-]  $8:= (8)[1^3,2]$
\item[-] $10_1:=(10)[1^4,2]$, $10_2:=(10)[1^6]$, $10_c=(10)[1] $ conic bundle
\item[-] $12:=(12)[1^5,2]$, $12_c= (12)[1]$, conic bundle
\item[-] $14:=(14)[1^6,2]$
\item[-] $18:= (18)[1^8,2]$
\item[-] $24:=(24)[1^{11},2]$
\item[-] $30:=(30)[1^{14},2]$
\item[-] $42:=(42)[1^{20},2]$
\end{itemize}

In the following propositions we list the 
structure of Galois groups of splitting fields, the structure or orbits on the set of exceptional curves, and the stabilizers
for each orbit.

\begin{prop}
\label{prop:d1}
Assume that $S$ is a minimal degree 2 del Pezzo surface over $k$ satisfying Condition (H1). Then $S$ either admits 
a conic bundle structure over $k$ or is one of the following types, each corresponding to a conjugacy class
of subgroups in $W(\mathsf E_7)$:
\begin{itemize}
\item[dP2(1)] $\mathfrak D_{7}$: $\{ 14^4\}$, trivial stabilizer
\item[dP2(2)] $\mathfrak F_7$: $\{14,42\}$, specializes to $\mathrm{dP2(1)}$, when restricted to $\mathfrak D_7\subset \mathfrak F_7$. 
\item[dP2(3)] $\mathfrak D_{15}$: $\{ 6_1,10_1^2,30\}$, stabilizers
$\{ \mathfrak C_5, \mathfrak C_3, 1\}$.  
\item[dP2(4)] $\mathfrak C_3\rtimes \mathfrak F_5$: $\{6_1,10_2^2,30\}$, 
stabilizers $\{ \mathfrak D_5, \mathfrak C_6,\mathfrak C_2\}$, 
with $\mathfrak C_2$ not normal. 
\end{itemize} 
\end{prop}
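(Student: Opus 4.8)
The plan is to reduce Condition (H1) to a finite group-theoretic problem and then run the {\tt magma} search described in the introduction. For a degree $2$ del Pezzo surface the anticanonical map presents $S$ as a double cover of $\bP^2$, and $G_k$ acts on the $56$ exceptional curves preserving the intersection pairing and the canonical class; as recalled in the introduction this action factors through a subgroup $G\le W(\mathsf E_7)$, and $\Pic(\bar S)$ is the corresponding rank-$8$ lattice. The key point is that Condition (H1) quantifies over \emph{all} finite extensions $k'/k$. Since such extensions correspond to subgroups of $G_k$ and the cohomology depends only on the image in $W(\mathsf E_7)$, the condition becomes purely group-theoretic:
\[
\rH^1(H,\Pic(\bar S))=0 \quad\text{for every subgroup } H\le G.
\]
Thus $G$ satisfies (H1) if and only if no subgroup of $G$ has nonvanishing first cohomology.

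This hereditary structure is what makes the search feasible. If a single subgroup $H$ has $\rH^1(H,\Pic(\bar S))\neq 0$, then every overgroup of $H$ automatically fails (H1), so one may prune the poset of conjugacy classes of subgroups of $W(\mathsf E_7)$ upward from each such $H$. Concretely I would enumerate the conjugacy classes of subgroups, sort them by increasing order, compute $\rH^1$ starting from the small ones, and delete every group containing a bad subgroup; iterating exhausts the poset and yields the groups satisfying (H1) together with their orbit decompositions on the $56$ curves.

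It then remains to impose the geometric constraints. Minimality is the condition of the introduction: no $G$-orbit on the $56$ curves is pairwise disjoint, so that no contraction is possible over $k$. A conic bundle structure corresponds to a $G$-invariant class $f$ with $f^2=0$ and $-K_S\cdot f=2$; in the orbit notation of this section it is signalled by an orbit of conic-bundle type, i.e.\ one of the graphs $(n)[1]$, whose $n/2$ matched pairs are the reducible fibers. Imposing minimality and setting aside the groups carrying such an invariant class leaves exactly the four types dP2(1)--dP2(4); the orbit graphs and the stabilizer data, including the fact that the $\mathfrak C_2$ appearing in dP2(4) is not normal, are then read off directly from the output.

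The main obstacle is computational rather than conceptual. The group $W(\mathsf E_7)$ has order $2903040$ and an enormous number of subgroups up to conjugacy, so a direct enumeration together with a cohomology computation for each is infeasible. The real work lies in organizing the upward pruning so that $\rH^1$ must be evaluated only for a manageable family of small subgroups; once the surviving groups are isolated, checking minimality, the absence of a conic bundle, and the precise orbit and stabilizer structure is routine.
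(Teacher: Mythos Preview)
Your proposal is correct and matches the paper's approach essentially line for line: the paper's proof is precisely the {\tt magma} computation described in the introduction---enumerate conjugacy classes of subgroups of $W(\mathsf E_7)$, compute $\rH^1$ starting from small groups, prune upward whenever nontrivial cohomology appears, and then filter the survivors by minimality and the presence of a conic bundle class. Your only overstatement is the claim that a direct enumeration is infeasible; in practice {\tt magma} lists the conjugacy classes of subgroups of $W(\mathsf E_7)$ without difficulty, and the pruning serves to cut down the number of $\rH^1$ computations rather than to make the enumeration itself possible.
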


Below we list all possible conic bundle types. Each $X$ admits two conic bundle structures over $k$, with isomorphic 
Galois actions on the set of exceptional fibers of the corresponding conic bundle. We organize by cardinalities of orbits on
these sets, and by the orbit structure on the set of exceptional curves of $X$. 

\

\noindent
{\em 3+3:}

\

\begin{itemize}
\item[D6(1)] $\mathfrak S_3$: $\{  2, 6_1^3, 6_2^2, 6_c^4 \}$, stabilizers $\{ \mathfrak C_3, 1, 1, 1\}$
\item[D6(2)] $\mathfrak C_3\rtimes \mathfrak S_3$: $\{2, 6_1^2, 6_c^4,18\}$, stabilizer 
$\{ \mathfrak C_3^2, \mathfrak C_3, \mathfrak C_3, 1 \}$
\end{itemize}

\
  
\noindent
{\em 5+1:}

\

\begin{itemize}
\item[D6(3)] 
$\mathfrak D_5$: $\{ 2_c^2, 2,  10^3_1, 10^2_c \}$, stabilizer $\{ \mathfrak C_5, \mathfrak C_5,1,1\}$
\item[D6(4)]
$\mathfrak F_5$: $\{2^2_c, 2,10_1,10^2_2, 10_c^2\}$, stabilizer $\{ \mathfrak D_5, \mathfrak D_5, \mathfrak C_2,\mathfrak C_2,\mathfrak C_2  \}$; 
$\mathfrak C_2$ is not normal
 \end{itemize}

\noindent
{\em 6:}

\

\begin{itemize}
\item[D6(5)]
$\mathfrak D_6$: $\{2, 6_1,12^2,12_c^2\}$,    stabilizer $\{ \mathfrak C_6, \mathfrak C_2,1,1\}$.    
\item[D6(6)]
$\mathfrak D_6$: $\{2,6_1,6_2^2,12,12_c^2\}$,    stabilizer $\{ \mathfrak S_3, \mathfrak C_2, \mathfrak C_2,1,1\}$.
\item[D6(7)]
$\mathfrak S_4$: $\{2,6_1,12_c^2,24\}$,    stabilizer $\{ \mathfrak A_4, \mathfrak C_2^2, \mathfrak C_2,1\}$.  
\item[D6(8)]
$\mathfrak S_4$: $\{4^2,6_2^2,12,12_c^2\}$,    stabilizer $\{ \mathfrak S_3,\mathfrak C_2^2,\mathfrak C_2, \mathfrak C_2\}$.   
\item[D6(9)]
$\mathfrak S_4$: $\{6_2^2,8,12,12_c^2\}$,    stabilizer $\{ \mathfrak C_4, \mathfrak C_3, \mathfrak C_2, \mathfrak C_2\}$.         
\item[D6(10)]
$\mathfrak S_3^2$: $\{2,12,12_c^2,18\}$,    stabilizer $\{ \mathfrak C_3 \times \mathfrak S_3,\mathfrak C_3,\mathfrak C_3,\mathfrak C_2\}$.
\item[D6(11)]
$\mathfrak C_2\times \mathfrak S_4$: $\{2,6_1,12_c^2,24\}$,    stabilizer $\{ \mathfrak C_2 \times \mathfrak A_4, \mathfrak C_2^3, \mathfrak C_2^2, \mathfrak C_2\}$, 
the stabilizer $\mathfrak C_2$ is not normal, and this case does not reduce to $\mathrm{D6}(7)$, with $\mathfrak S_4$-action.
\item[D6(12)]
$\mathfrak C_2\times \mathfrak S_4$: $\{6_2^2,8,12,12_c^2\}$,    stabilizer $\{ \mathfrak D_4,\mathfrak S_3,\mathfrak C_2^2,\mathfrak C_2^2 \}$.
\item[D6(13)]
$\mathfrak S_5$: $\{2,12_c^2,30\}$,   stabilizer $\{\mathfrak A_5,\mathfrak D_5,\mathfrak C_2^2\}$.
\item[D6(14)]
$\mathfrak S_5$: $\{10_2^2,12,12_c^2\}$,   stabilizer $\{ \mathfrak D_6,\mathfrak D_5,\mathfrak D_5\}$.
\end{itemize}

Some types above are specializations of other types, by restriction to subgroups:

\

\centerline{
\xymatrix{ 
 &   \mathfrak S_3^2 \ar@{=}[d] &  & \mathfrak C_2\times \mathfrak S_4 \ar@{=}[d] & \mathfrak S_5\ar@{=}[d]  & \mathfrak S_5 \ar@{=}[d] & \mathfrak C_2\times \mathfrak S_4 \ar@{=}[d] & \\
 &   (10) \ar[dl] \ar[d] \ar[dr] &  & (11) \ar[dl] \ar[d] & (13) \ar[dl] \ar[d]  & (14) \ar[dl]\ar[d] & (12) \ar[dl]\ar[d]\ar[dr] & \\
(2) \ar[dr] &  (6)\ar[d] & (5) \ar[dl]  & (7) &  (4) & (8) & (9)\ar[d] & (6)\\   
  & (1) &  &  &  &  & (1) &
   }
}

\

\section{Degree 1}
\label{sect:1}

\begin{prop}
\label{prop:d1}
If $S$ is a minimal degree 1 del Pezzo surface satisfying Condition (H1) then $S$ is a conic bundle over $k$.  
\end{prop}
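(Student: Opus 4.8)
The plan is to settle this by the same computer-assisted classification already used in degrees $4$, $3$ and $2$, now carried out inside the Weyl group $W(\mathsf E_8)$ through which $G_k$ acts. First I would record $\Pic(\bar S)$ as a $\bZ[W(\mathsf E_8)]$-module: since $K_S^2=1$ it splits orthogonally as $\bZ K_S\oplus \mathsf E_8$, with $W(\mathsf E_8)$ acting trivially on the first factor and by the reflection representation on the $\mathsf E_8$-lattice, so that $\rH^1(H,\Pic(\bar S))\cong \rH^1(H,\mathsf E_8)$ for every finite $H$. The $240$ exceptional curves form a single $W(\mathsf E_8)$-orbit of $(-1)$-classes, which supplies the permutation data needed to read off orbit types and to detect conic-bundle classes.

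Next I would reduce Condition (H1) to a group-theoretic statement. Writing $G\le W(\mathsf E_8)$ for the image of $G_k$, the finite separable extensions $k'/k$ correspond to the subgroups $H\le G$, so (H1) holds precisely when $\rH^1(H,\Pic(\bar S))=0$ for every $H\le G$. The decisive feature is that this is a monotone condition: if a single $H$ has $\rH^1(H,\Pic(\bar S))\neq 0$, then every overgroup of $H$ violates (H1). I would therefore enumerate the conjugacy classes of subgroups of $W(\mathsf E_8)$ and, working upward from the small groups, compute $\rH^1(\,\cdot\,,\mathsf E_8)$, discarding at once every group that already contains a subgroup with nonvanishing $\rH^1$. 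This pruning is what makes the traversal of the subgroup poset feasible.

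For the groups surviving the cohomology test I would then impose minimality in the sense of the introduction—no $G$-orbit of pairwise disjoint exceptional curves—and separate the two minimal alternatives. By the classification of minimal geometrically rational surfaces (\cite{isk}) such a surface has $\mathrm{rank}\,\Pic(\bar S)^{G}\in\{1,2\}$: the genuinely minimal del Pezzo case is $\mathrm{rank}\,\Pic(\bar S)^{G}=1$, equivalently $(\mathsf E_8)^{G}=0$, while the conic-bundle case is $\mathrm{rank}\,\Pic(\bar S)^{G}=2$, detected by a $G$-invariant class $f$ with $f^2=0$ and $-K_S\cdot f=2$. The proposition is then exactly the statement that the first case never survives: every minimal $G$ passing (H1) carries such an invariant conic class. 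Concretely I would, for each survivor, compute $\Pic(\bar S)^{G}$, test for an $f$ as above, and verify that no survivor has $(\mathsf E_8)^{G}=0$.

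The hard part is the sheer size of $W(\mathsf E_8)$: with $|W(\mathsf E_8)|=696729600$ it dwarfs $W(\mathsf E_6)$ and $W(\mathsf E_7)$ used in degrees $3$ and $2$, and the number of conjugacy classes of its subgroups is so large that a blind enumeration is hopeless; feasibility rests entirely on the monotone pruning, by which detecting nonvanishing $\rH^1$ at a small subgroup deletes its whole up-set from the poset. A secondary and more delicate issue is certifying correctness—completeness of the conjugacy-class enumeration and reliability of the lattice-cohomology and invariant computations. Here I would lean on {\tt magma}'s subgroup-lattice routines and add internal consistency checks, in particular confirming that no cyclic group appears among the survivors, as demanded by the alternative proof of Theorem~\ref{thm:1}.
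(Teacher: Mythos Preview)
Your proposal is correct and follows essentially the same computer-assisted strategy as the paper: enumerate conjugacy classes of subgroups of $W(\mathsf E_8)$, prune via the monotone (H1) condition by computing $\rH^1$ on small subgroups first, then test the survivors for minimality and for an invariant conic class. The paper's listing of types $\mathrm{D7}(1)$--$\mathrm{D7}(10)$ is precisely the output of this procedure, and your observation that no cyclic group survives is exactly the consistency check the paper highlights.
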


As Galois orbits we have unions of degenerate fibers of conic bundles ($4_c, 6_c, 8_c, 10_c$) and 
several new orbit types:

\begin{itemize}
\item[-]  $3:=(3)[2^2]$ 
\item[-]  $4_1:=(4)[2^2]$, $4_2:=(4)[1^2,2]$, $4_3=(4)[1^2,3]$, 
\item[-] $5:=(5)[1^2,2^2]$.
\item[-] $6_3:=(6)[2^2,3]$, $6_4:=(6)[1^3,2^2]$
\item[-] $10_3:=(10)[1^3,2^4]$,  $10_4:=(10)[1^4,2^2,3]$, 
\item[-] $12_1:=(12)[1,2^6]$
$12_2:=(12)[1^4,2^3]$, 
$12_3:=(12)[1^2,2^4,3]$, \\
$12_4:=(12)[1^8,2]$,
$12_5:=(12)[1^6,2^2,3]$
\item[-]
$20_1:=(20)[1^2,2^8,3]$,
$20_2:=(20)[1^8,2^4]$,
$20_3:=(20)[1^6,2^6,3]$, 
$20_4:=(20)[1^{12},2^2]$,
$20_5:=(20)[1^9,2^6]$
\item[-] 
$24_1:=(24)[1^2,2^{10},3]$,
$24_2:=(24)[1^{13},2^3]$
\item[-] 
$36_1:=(36)[1^{18},2^5]$, 
$36_2:=(36)[1^{18},2^8,3]$.
\item[-] $40:=(40)[1^{18},2^{10},3]$
\end{itemize}
The types of occurring conic bundles are listed below, each corresponding to a conjugacy class of subgroups in $W(\mathsf E_8)$: 

\

\noindent
{\em 1+3+3:}

\

\begin{itemize}
\item[D7(1)] 
$\mathfrak S_3^2$: $\{ 2^2_c, 3^4,4^2_2,  6_3^2 , 6^4_c,  12_2^4, 12_3^2, 36_1^2, 36_2\}$, \\
stabilizer
$\{\mathfrak C_3 \rtimes \mathfrak S_3,  \mathfrak D_6, \mathfrak C_3^2, \mathfrak S_3, \mathfrak S_3, \mathfrak C_3, \mathfrak C_3, 1,1\}$

\end{itemize}

\

\noindent
{\em 1+1+5:}

\

\begin{itemize}
\item[D7(2)]
$\mathfrak D_{10} $: $\{2^4_c,   4^2_1, 4_3, 5^4, 10_2^2, 10^2_c,  20_1^2, 20_2^4, 20_4^2\}$, stabilizer\\
$\{\mathfrak D_5,\mathfrak C_5,\mathfrak C_5,\mathfrak C_2^2,\mathfrak C_2,\mathfrak C_2,1,1,1 \}$.
\item[D7(3)]
$\mathfrak C_2\times \mathfrak F_5$: $\{ 2^4_c,  4_1^2, 4_3, 
10_1^2, 10_c^2,  20_1^2, 20_3, 20_4^6\}$, stabilizer\\ 
$\{\mathfrak F_5,
\mathfrak D_5,\mathfrak D_5,
\mathfrak C_2^2,\mathfrak C_2^2,
\mathfrak C_2,\mathfrak C_2,\mathfrak C_2\}$. 
\end{itemize}

\noindent
{\em 2+5:}

\

\begin{itemize}
\item[D7(4)] 
$\mathfrak C_5 \rtimes \mathfrak C_4$: $\{ 4_1^2, 4_3, 4_c^2, 5^4, 10_2^2, 10_c^2, 20_2^4, 20_4^2, 20_5^2\}$, stabilizer\\
$\{ \mathfrak C_5,\mathfrak C_5,\mathfrak C_5,\mathfrak C_4,\mathfrak C_2,\mathfrak C_2,1,1,1\}$
\item[D7(5)]
$\mathfrak F_5$: 
$\{4_1^2, 4_3, 4_c^2, 10_1^2, 10_c^2, 20_3, 20_4^6, 20_5^2\}$, stabilizer\\
$\{ \mathfrak C_5,\mathfrak C_5,\mathfrak C_5,\mathfrak C_2,\mathfrak C_2,1,1,1\}$
\item[D7(6)]
$\mathfrak C_5 \rtimes \mathfrak D_4$: 
$\{ 4_1^2, 4_3, 4_c^2,
5^4,
10_2^2, 10_c^2,
20_2^4, 20_4^2,
40\}$, 
stabilizer\\
$\{\mathfrak C_{10},\mathfrak C_{10},\mathfrak D_5,
\mathfrak D_4,
\mathfrak C_2^2,\mathfrak C_2^2, 
\mathfrak C_2,\mathfrak C_2, 1\}$ 
\item[D7(7)]
$\mathfrak C_2 \times \mathfrak F_5$: 
$\{2_c, 4_1^2, 4_3, 4_c^2,
10_1^2, 10_c^2, 
20_3,  20_4^6, 20_5^2\}$, stabilizer\\
$\{\mathfrak F_5, 
\mathfrak D_5,\mathfrak D_5,\mathfrak D_5,
\mathfrak C_2^2,\mathfrak C_2^2,
\mathfrak C_2,\mathfrak C_2,\mathfrak C_2\}$; 
the stabilizer $\mathfrak C_2$ is not normal and we cannot reduce to $\mathrm D7(5) = \mathfrak F_5$
\item[D7(8)]
$\mathfrak C_2^2\rtimes \mathfrak F_5$:
$\{4_1^2, 4_3, 4_c^2,10_1^2, 10_c^2,20_3, 20_4^6,40\}$, stabilizer\\
$\{ \mathfrak D_{10},\mathfrak D_{10},\mathfrak F_5, \mathfrak C_2^3,\mathfrak C_2^3, \mathfrak C_2^2,\mathfrak C_2^2,\mathfrak C_2\}$
\end{itemize}

\noindent
{\em 1+6:}

\

\begin{itemize}
\item[D7(9)]
$(\mathfrak C_3 \rtimes \mathfrak S_3) \rtimes \mathfrak C_2$: 
$\{2_c^2, 4_2^2,  6_2^2,  12_1^2, 12_4^4, 12_5, 12_c^2,  36_1^2, 36_2\}$, stabilizer\\
$\{ \mathfrak C_3 \rtimes \mathfrak S_3,  
\mathfrak C_3^2,
\mathfrak S_3,\mathfrak C_3,\mathfrak C_3,\mathfrak C_3,\mathfrak C_3,1,1\}$
\item[D7(10)]
$\mathfrak S_3 \wr \mathfrak C_2$: 
$\{2_2^2,4_2^2,  6_2^2,  12_5, 12_c^2,24_1, 24_2^2,  36_1^2, 36_2\}$, stabilizer\\
$\{\mathfrak S_3^2,\mathfrak C_3 \times \mathfrak S_3,\mathfrak D_6,\mathfrak S_3,\mathfrak S_3,\mathfrak C_3,\mathfrak C_3,\mathfrak C_2,\mathfrak C_2\}$
\end{itemize}

Again, some types are specializations, by restriction to subgroups: 

\

\centerline{
\xymatrix{ 
\mathfrak C_5 \rtimes \mathfrak D_4 \ar@{=}[d]  & & & \mathfrak C_2^2\rtimes \mathfrak F_5 \ar@{=}[d]& \mathfrak S_3 \wr \mathfrak C_2\ar@{=}[d] & \\
(6) \ar[d] \ar[rd] &       &       & (8) \ar[dl] \ar[d]   & (10) \ar[d] \ar[dr] &  \\
(4)                     & (2) & (3)  & (7) \ar[d]             & (1)                       & (9) \\ 
                         &       &       &  (5)                      &                            &   
}
}

\bibliographystyle{alpha}
\bibliography{h1dp}
\end{document}